\let\@fnsymbol\@arabic
\newcommand{\comm}[1]{}
\newtheorem{teorema}{Theorem}
\newtheorem{lema}[teorema]{Lemma}
\newtheorem{proposicion}[teorema]{Proposition}
\theoremstyle{definition}
\newtheorem{definicion}[teorema]{Definition}
\newtheorem{remark}[teorema]{Remark}
\DeclareMathOperator{\PP}{\mathbb P}
\DeclareMathOperator{\Sing}{Sing}
\DeclareMathOperator{\linspan}{linspan}
\DeclareMathOperator{\rank}{rank}
\DeclarePairedDelimiter\ceil{\lceil}{\rceil}
\newtheorem*{theorem*}{Theorem}
\title{Binary forms of suprageneric rank and the multiple root loci}
\author{Alejandro Gonz\'alez Nevado\thanks{Universität Konstanz, Germany - alejandro.gonzalez-nevado@uni-konstanz.de} \and Ettore Teixeira Turatti\thanks{DIMAI University of Florence, Italy - ettore.teixeiraturatti@unifi.it}}
\date{}
\begin{document}
\maketitle 

\begin{abstract}
We state the relation between the variety of binary forms of given rank and the dual of the multiple root loci. This is a new result for the suprageneric rank, as a continuation of the work by Buczyński, Han, Mella and Teitler. We describe the strata of these varieties and explore their singular loci.
\end{abstract}


\section{Introduction}

Let $V$ be a vector space of dimension $n+1$ over an algebraically closed field $K$ of characteristic zero. Let $f\in S_dV^*$ be a homogeneous form of degree $d$. The rank of $f$, also called the Waring rank, is defined to be the smallest integer $r$ such that $$f=l_1^d+\dots +l_r^d,$$ where $l_i,\ i=1,\dots,r$ are linear forms. 

The general rank $g$ of a form $f$, where the general rank means the rank that a general $f\in S_dV^*$ has, is a well known result and it is given by $$g=\ceil[\bigg]{\frac{\binom{n+d}{d}}{n+1}},$$ with exception of a finite number of cases, see \cite{Alexander} \cite{BRAMBILLA20081229}.

Let $S_{d,r}=\{f\in S_dV^*| \rank f=r\}$ be the set of forms of rank $r$. Let $X\subset \PP(S_d)$ be the Veronese variety. Then the variety obtained from the Zariski closure of $S_{d,r}$ coincides with the $r$-secant variety of the Veronese variety, $$
\sigma_r(X)=\overline{S_{d,r}}
$$
for every $r\leq g$. On the other hand, since $\sigma_g(X)$ fulfills the ambient space, $\overline{S_{d,r}}$ cannot be expressed as a secant variety if $r>g.$ The secant varieties of the Veronese variety have been vastly studied, but the last case has been less considered, however it has recently received more attention, as in \cite{seigal2019ranks}.

In this article we look at the case of binary forms of suprageneric rank, in the light of the work developed in \cite{gcms} on the strata of binary forms of rank at most generic, where the following description is obtained. 
\begin{theorem*}[Comas-Seiguer]
Let $0\leq k< \ceil{\frac{d+1}{2}}$ be an integer, then
$$
\overline{S_{d,k+1}}=(\cup_{i=1}^{k+1}S_{d,i})\bigcup(\cup_{i=0}^{k}S_{d,d-i+1}),
$$
where $\overline{S_{d,0}}=\overline{S_{d,d+1}}=\emptyset$. Furthermore, we have that $$\overline{S_{d,k+1}}\smallsetminus \overline{S_{d,k}}=S_{d,k+1}\cup S_{d,d-k+1}. $$
\end{theorem*}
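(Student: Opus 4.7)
The plan is to use the classical Sylvester--Gordan apolarity description of binary forms as the central tool. For any binary $f$ of degree $d$, the apolar ideal $f^\perp \subset K[\partial_x, \partial_y]$ is a complete intersection $(g_1, g_2)$ whose generators have degrees $e_1 \leq e_2$ with $e_1 + e_2 = d+2$, and Sylvester's theorem asserts that $\rank f = e_1$ when $g_1$ is squarefree and $\rank f = e_2 = d+2-e_1$ otherwise. Consequently, the condition $\dim(f^\perp)_{k+1} \geq 1$, equivalent to $e_1 \leq k+1$, translates exactly to $\rank f \leq k+1$ or $\rank f \geq d-k+1$.

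First I would identify $\overline{S_{d,k+1}}$ with a determinantal locus. Since $k+1 \leq \lceil (d+1)/2 \rceil = g$, the introduction gives $\overline{S_{d,k+1}} = \sigma_{k+1}(X)$, and the classical description of secant varieties of the rational normal curve yields
$$
\sigma_{k+1}(X) = \{ f \in \PP(S_dV^*) : \dim (f^\perp)_{k+1} \geq 1 \},
$$
namely the vanishing locus of the $(k+2) \times (k+2)$ minors of the $(k+1)$-th catalecticant matrix of $f$.

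Combining these two descriptions, Sylvester's dichotomy translates the catalecticant condition into $\rank f \in \{1, \ldots, k+1\} \cup \{d-k+1, \ldots, d+1\}$; capping at $\rank f \leq d$ and using the convention $\overline{S_{d,0}} = \overline{S_{d,d+1}} = \emptyset$ yields the first decomposition. For the difference formula I would apply the same argument to $\overline{S_{d,k}}$: the set-theoretic difference $\overline{S_{d,k+1}} \setminus \overline{S_{d,k}}$ corresponds precisely to the forms with $e_1 = k+1$, which split according to the squarefreeness of $g_1$ into forms of rank $k+1$ and forms of rank $d-k+1$.

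The main obstacle is Sylvester's dichotomy itself, specifically its suprageneric half: one must show that for $f$ with $e_1 = k+1$ and $g_1$ non-squarefree, $\rank f$ equals exactly $d-k+1$ and not anything smaller. The upper bound follows by writing $f$ in terms of the other generator $g_2$, which may be chosen squarefree. For the lower bound, any decomposition $f = \sum_{i=1}^r l_i^d$ produces a degree-$r$ form in $f^\perp$ vanishing on the $r$ apolar points $[l_i]$; combined with the complete-intersection structure of $f^\perp$ and the hypothesis that $(f^\perp)_{e_1}$ is spanned by a non-squarefree form, this forces $r \geq e_2$. Once Sylvester's theorem is invoked as a black box, the remainder of the argument is formal bookkeeping.
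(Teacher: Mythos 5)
This statement is quoted background: the paper attributes it to Comas--Seiguer and cites \cite{gcms} without giving any proof, so there is no internal argument to compare yours against. Judged on its own, your proof is essentially correct and follows the classical Sylvester route: identify $\overline{S_{d,k+1}}=\sigma_{k+1}(X)$ with the catalecticant determinantal locus $\{\dim(f^{\perp})_{k+1}\geq 1\}=\{e_1\leq k+1\}$, then read off the possible ranks from Sylvester's dichotomy. The two nontrivial inputs you invoke --- that the $(k+2)\times(k+2)$ catalecticant minors cut out the secant varieties of the rational normal curve set-theoretically, and Sylvester's theorem itself --- are both classical and correctly stated, and your sketch of the hard half of the dichotomy (a rank-$r$ decomposition gives a squarefree element of $(f^{\perp})_r$; for $e_1\leq r<e_2$ every element of $(f^{\perp})_r$ is a multiple of the non-squarefree $g_1$, forcing $r\geq e_2$) is the standard and correct argument. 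The bookkeeping also checks out: $\{e_1\leq k+1\}$ corresponds exactly to rank in $\{1,\dots,k+1\}\cup\{d-k+1,\dots,d\}$, and the difference $\overline{S_{d,k+1}}\smallsetminus\overline{S_{d,k}}$ is the stratum $e_1=k+1$, which splits as $S_{d,k+1}\cup S_{d,d-k+1}$ according to squarefreeness of $g_1$. It is worth noting that this is a genuinely different toolkit from the one the paper uses for its own suprageneric analogue (Theorem \ref{2}), where the key step is instead the identification $\overline{S_{d,d-k}}=\Delta_{3,2^k,1^{d-2k-3}}^{\vee}$ together with the Waring/forbidden-locus machinery; your catalecticant approach is available below the generic rank precisely because the secant-variety description holds there, and it does not carry over directly to the suprageneric case the paper is actually concerned with.
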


We prove a similar result for the suprageneric case, notice that the second union has a shift on the indices.

\begin{teorema}\label{2}
Let $k$ be an integer and suppose that $d\geq d-k>\ceil{(\frac{d+1}{2})}$, then $\overline{S_{d,d-k}}$ is the union $$\overline{S_{d,d-k}}=(\cup_{i=1}^{k+1}S_{d,i})\bigcup (\cup_{i=0}^{k} S_{d,d-i}). $$ In particular $\overline{S_{d,d-k}}\smallsetminus \overline{S_{d,d-k+1}}=S_{d,k+1}\cup S_{d,d-k}$.
\end{teorema}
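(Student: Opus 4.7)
The plan is to identify $\overline{S_{d,d-k}}$ with the image of an incidence correspondence built from the discriminant hypersurface on binary forms of degree $k+2$, and then to read off the rank stratification via the Sylvester description of binary apolar ideals. Throughout, let $\Delta_{k+2} \subset \PP(S^{k+2} V^*)$ denote the discriminant hypersurface, consisting of binary forms of degree $k+2$ with a multiple root, and recall Sylvester's theorem: for a nonzero binary form $f$ of degree $d$, the apolar ideal $f^\perp$ is a complete intersection generated by two forms of degrees $\alpha \le \beta$ with $\alpha + \beta = d+2$, and $\rank f = \alpha$ if the degree-$\alpha$ generator has distinct roots, otherwise $\rank f = \beta$. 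The hypothesis $d - k > \ceil{(d+1)/2}$ forces $k+2 \le \ceil{(d+1)/2}$ and $d \geq 2k+3$, so the ``intermediate'' range $\{k+2, \ldots, d-k-1\}$ of ranks is non-empty. A form $f$ lies in $S_{d,d-k}$ precisely when $f^\perp_j = 0$ for $j < k+2$, $\dim f^\perp_{k+2} = 1$, and the generator of $f^\perp_{k+2}$ lies in $\Delta_{k+2}$.

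I would set up the incidence
\[
I \;=\; \bigl\{(f,g) \in \PP(S^d V^*) \times \Delta_{k+2} : g \cdot f = 0\bigr\},
\]
where $g \cdot f$ is the apolar action, together with its first projection $\pi \colon I \to \PP(S^d V^*)$, and set $Y := \pi(I)$. The second projection $I \to \Delta_{k+2}$ exhibits $I$ as a $\PP^{k+1}$-bundle, because every nonzero $g$ of degree $k+2$ annihilates a $(k+2)$-dimensional subspace of $S^d V^*$ by Macaulay's theorem; since $\Delta_{k+2}$ is an irreducible hypersurface in $\PP^{k+2}$, the variety $I$ is irreducible of dimension $2k+2$. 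The Sylvester characterisation gives $S_{d,d-k} \subseteq Y$, and for a generic $f \in S_{d,d-k}$ one has $\dim f^\perp_{k+2} = 1$, so the generic fibre of $\pi$ is a single point. Consequently $\dim Y = 2k+2 = \dim S_{d,d-k}$, and because $Y$ is irreducible this forces $Y = \overline{S_{d,d-k}}$.

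With this identification in hand, a case analysis by rank determines the stratification of $Y$. For $r \in \{1, \ldots, k+1\}$, Sylvester supplies a degree-$r$ apolar generator $h$ with distinct roots; any $p \in S^{k+2-r}$ sharing a root with $h$ produces $ph \in \Delta_{k+2} \cap f^\perp_{k+2}$, so $f \in Y$. For $r \in \{d-k, \ldots, d\}$, the degree-$\alpha$ generator with $\alpha = d-r+2 \le k+2$ already has a multiple root, and every degree-$(k+2-\alpha)$ multiple inherits that multiple root, giving $f \in Y$. For the intermediate range $r \in \{k+2, \ldots, d-k-1\}$, either $f^\perp_{k+2} = 0$ (when $\alpha \neq k+2$) or $\dim f^\perp_{k+2} = 1$ with its generator having distinct roots (the borderline subgeneric case $\alpha = r = k+2$, where $k+2 \le g$ ensures $r = k+2$ is indeed subgeneric); in both subcases $f^\perp_{k+2} \cap \Delta_{k+2} = \emptyset$ and $f \notin Y$. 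Collecting the ranks that do appear gives the claimed union. The ``in particular'' statement then follows by applying the theorem with $k-1$ in place of $k$ to describe $\overline{S_{d,d-k+1}}$ and taking the set-theoretic difference, which leaves only $S_{d,k+1}$ and $S_{d,d-k}$.

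The main obstacle is the identification $Y = \overline{S_{d,d-k}}$: it rests on combining the $\PP^{k+1}$-bundle structure on $I$, the classical irreducibility of the discriminant, and the generic one-dimensionality of $f^\perp_{k+2}$ on $S_{d,d-k}$ into a dimension comparison that forces the equality. Once the incidence picture is set up correctly, the rank case analysis is routine and parallels the Comas--Seiguer argument in the subgeneric regime.
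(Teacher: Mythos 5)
Your proof is correct in substance but takes a genuinely different route from the paper. The paper first establishes Proposition \ref{3} ($\overline{S_{d,d-k}}=\Delta_{3,2^k,1^{d-2k-3}}^\vee$), reads off from the conormal parametrization that $\overline{S_{d,d-k}}=\{l_0^{d-1}g+l_1^d+\dots+l_k^d\}$, and then argues about ranks using the Waring and forbidden loci of \cite{Carlini_2017}: the high-rank strata are reached by inductively adding powers from the forbidden locus until rank $d$ is attained and then subtracting them back, and the reverse inclusion uses the fact that adding a single $d$-th power changes the rank by at most one. You replace all of this with an incidence correspondence over the full discriminant hypersurface $\Delta_{k+2}$ and let Sylvester's theorem do the entire rank bookkeeping through the degree-$(k+2)$ graded piece of the apolar ideal; your case analysis for which $S_{d,r}$ meet $Y=\pi(I)$ is complete and correct (in particular the verification that $\alpha\geq k+2$ forces $f^\perp_{k+2}$ to miss the discriminant in the intermediate range). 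Your approach buys a cleaner, purely apolarity-based membership test and gets irreducibility of $\overline{S_{d,d-k}}$ for free from the projective-bundle structure of $I$; the paper's approach buys the explicit normal form $l_0^{d-1}g+\sum l_i^d$, which it then reuses in the proof of Theorem \ref{4}. Both arguments ultimately lean on the same external input $\dim\overline{S_{d,d-k}}=2k+2$ from \cite[Proposition 19]{jbkhmmzt}. One step you should tighten: the sentence ``the generic fibre of $\pi$ is a single point, consequently $\dim Y=2k+2$'' does not quite parse, since you only control the fibres over $S_{d,d-k}$ and you do not yet know that $S_{d,d-k}$ is dense in $Y$; the clean version is simply $\dim Y\leq\dim I=2k+2$ together with $\dim Y\geq\dim\overline{S_{d,d-k}}=2k+2$, after which irreducibility of $Y$ forces $Y=\overline{S_{d,d-k}}$. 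With that rewording the proof is complete.
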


In order to prove these results, we show the relation between binary forms of fixed rank and the variety of multiple root loci. For an integer $0\leq r\leq \ceil[big]{\frac{n+1}{2}}$, it is known that $\overline{S_{d,r}}=\Delta_{2^r,1^{d-2r}}^\vee$. In \cite[Proposition 19]{jbkhmmzt} is proven that for the suprageneric case, $d\geq d-k>\ceil{\frac{d+1}{2}}$, $\overline{S_{d,d-k}}$ is the join of $k$ copies of the Veronese variety $X\subset \PP^d$ with its tangent variety $\tau(X)$, and compute the dimension $\dim(S_{d,d-k})=2k+2$, using this we find that an analogous relation with the dual of the multiple root loci also exist, and obtain the following proposition.

\begin{proposicion}\label{3}
Let $k$ be an integer and suppose that $d\geq d-k>\ceil{\frac{d+1}{2}},$ then $$\overline{S_{d,d-k}}=\Delta_{3,2^k,1^{d-2k-3}}^\vee.$$

\end{proposicion}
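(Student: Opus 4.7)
The plan is to show $\Delta_{3,2^k,1^{d-2k-3}}^\vee = J(\tau(X), \sigma_k(X))$; by \cite[Proposition 19]{jbkhmmzt} the right-hand side equals $\overline{S_{d,d-k}}$. First I would take a generic
$$f = \ell_0^3 \prod_{i=1}^{k}\ell_i^2 \prod_{j=1}^{d-2k-3} m_j \in \Delta := \Delta_{3,2^k,1^{d-2k-3}}$$
and compute its tangent space. Perturbing each linear factor one at a time, every tangent direction turns out to be divisible by the common factor
$$R \;:=\; \ell_0^2 \prod_{i=1}^{k}\ell_i \in S_{k+2}V^*,$$
so $T_f\Delta \subseteq R \cdot S_{d-k-2}V^*$. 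A count of distinct roots gives $\dim\Delta = d-k-2$, so the affine tangent space has dimension $d-k-1 = \dim(R\cdot S_{d-k-2}V^*)$; the inclusion is therefore an equality, and $T_f\Delta = R\cdot S_{d-k-2}V^*$.

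Next, I would compute the apolar orthogonal $(T_f\Delta)^\perp$ in the $SL_2$-equivariant apolarity pairing on $S_dV^*$. The standard adjoint identity gives
$$(R\cdot S_{d-k-2}V^*)^\perp \;=\; \ker\bigl(R(\partial)\colon S_dV^*\to S_{d-k-2}V^*\bigr),$$
and the classical apolarity decomposition for binary forms in characteristic zero writes this kernel as the transverse direct sum of the apolar subspaces attached to each distinct linear factor of $R$: the double root $\ell_0$ contributes $\ell_0^{d-1}V^*$ (the affine tangent line to $X$ at $[\ell_0^d]$), and each simple root $\ell_i$ contributes the line $K\ell_i^d$. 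One obtains
$$(T_f\Delta)^\perp \;=\; \ell_0^{d-1}V^* \;+\; \mathrm{span}(\ell_1^d,\dots,\ell_k^d),$$
a linear subspace of dimension $k+2$.

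A general element of this subspace has the form $F=\ell_0^{d-1}\ell_0'+\sum_{i=1}^{k}c_i\ell_i^d$, which is exactly a generic point of $J(\tau(X),\sigma_k(X))$. Crucially, the simple roots $m_j$ have disappeared, so as $f$ varies over the smooth locus of $\Delta$ only the multiple-root tuple $(\ell_0,\ell_1,\dots,\ell_k)\in(\PP^1)^{k+1}$ enters, and sweeping produces $J(\tau(X),\sigma_k(X))$ in full. Combining with the computation above,
$$\Delta_{3,2^k,1^{d-2k-3}}^\vee \;=\; \overline{\bigcup_f (T_f\Delta)^\perp} \;=\; J(\tau(X),\sigma_k(X)) \;=\; \overline{S_{d,d-k}},$$
with both sides irreducible of dimension $2k+2$, consistent with the conormal identity $\dim\Delta^\vee = (d-1)-(d-2k-3)=2k+2$.

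I expect the main obstacle to be the apolar decomposition in the second step: verifying that $\ker R(\partial)$ is the transverse direct sum of the apolar contributions from the distinct factors of $R$. This is classical apolarity, but it requires careful bookkeeping with the identification $V\cong V^*$ implicit in comparing $\mathbb{P}(S_dV^*)$ with the dual projective space $\mathbb{P}(S_dV)$.
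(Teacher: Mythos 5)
Your argument is correct, but it takes a genuinely different route from the paper's. You compute the dual variety directly: at a smooth point $f$ of $\Delta_{3,2^k,1^{d-2k-3}}$ you identify $(T_f\Delta)^\perp$ with $\ell_0^{d-1}V+\langle\ell_1^d,\dots,\ell_k^d\rangle$ and conclude that $\Delta_{3,2^k,1^{d-2k-3}}^\vee$ is the join of $\tau(X)$ with $\sigma_k(X)$, which you then match with $\overline{S_{d,d-k}}$ by invoking the full geometric statement of \cite[Proposition 19]{jbkhmmzt}. The paper instead argues form by form: for $f$ of rank $d-k$ the apolar ideal is $(g_1,g_2)$ with $\deg g_1=k+2$, and the rank criterion for binary forms forces $g_1$ to be non-squarefree, generically $g_1=l_0^2l_1\cdots l_k$; the annihilation criterion of \cite[Lemma 2.2]{s} then places $f$ in $\Delta_{3,2^k,1^{d-2k-3}}^\vee$, and the reverse inclusion is a dimension count ($2k+2$ on both sides, using only the dimension part of Proposition 19 together with $\dim\Delta_\lambda^\vee=d-m_1-1$). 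The trade-off: your version makes the geometry of the dual variety explicit (it essentially re-derives the conormal parametrization already recorded in the paper's preliminaries) but leans on the stronger join identification from the external reference, whereas the paper needs only the dimension from that reference but must handle the degenerate possibilities for $g_1$ with more repeated roots, which land in smaller strata $\Delta_\lambda^\vee\subset\Delta_{3,2^k,1^{d-2k-3}}^\vee$. The obstacle you flag, namely that $\ker\bigl(R(\partial)\bigr)$ decomposes as the direct sum of the apolar contributions of the distinct factors of $R$, is indeed the only point requiring care; it holds by the standard count, since each factor $\ell^a$ contributes an $a$-dimensional kernel and $\deg R=k+2\leq d+1$ guarantees transversality.
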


This proposition also allows us to prove that forms of rank different from $d-k$ in $S_{d,d-k}$ are singular points of this variety. More precisely 

\begin{teorema}\label{4}
Let $d-k>\ceil[big]{\frac{d+1}{2}}$, then the singular locus of $\overline{S_{d,d-k}}$ contains the subvariety $\overline{S_{d,k+1}}\cup\overline{S_{d,d-k+1}}.$ 
\end{teorema}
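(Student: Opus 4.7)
The plan is to prove singularity at a general point of each irreducible component of $\overline{S_{d,k+1}}\cup\overline{S_{d,d-k+1}}$; closedness of the singular locus then extends singularity to the whole component. Concretely, I would show $\dim T_f\overline{S_{d,d-k}}>2k+2=\dim\overline{S_{d,d-k}}$ at such general points.

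For the component $\overline{S_{d,d-k+1}}$, I would invoke \cite[Proposition 19]{jbkhmmzt} again: this variety is the join of $k-1$ copies of $X$ with $\tau(X)$, so inside $\overline{S_{d,d-k}}=\underbrace{X*\cdots*X}_{k}*\tau(X)$ it corresponds to dropping one $X$-factor. This is the classical secant-like inclusion analogous to $\sigma_{r-1}(X)\subseteq\mathrm{Sing}\,\sigma_r(X)$ in the non-defective case, and the same Terracini argument carries over: at a general $f\in S_{d,d-k+1}$ there is a positive-dimensional family of completions to a $k$-fold join parameterization of $f$ (formally, by adding a Veronese summand $\epsilon\cdot l^d$ and letting $\epsilon\to 0$ along different directions $l$), and Terracini's lemma applied along this family produces a tangent span of dimension strictly greater than $2k+2$.

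For the component $\overline{S_{d,k+1}}$, a general $f$ has rank exactly $k+1$, admits an essentially unique Waring decomposition $f=l_1^d+\cdots+l_{k+1}^d$, and lies in $\overline{S_{d,d-k}}$ by Theorem \ref{2}. I would exploit the apolarity presentation $f^\perp=(p,q)$ with $\deg p=k+1$ (squarefree, encoding the $l_i$) and $\deg q=d-k+1$ to construct an explicit positive-dimensional family of rank-$(d-k)$ deformations of $f$ obtained by perturbing $q$ to nearby squarefree forms and tracking the resulting decompositions. Each member of the family furnishes a limiting tangent space at $f$ via Terracini, and varying the direction of deformation yields enough independent contributions to make $\dim T_f\overline{S_{d,d-k}}>2k+2$.

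The main obstacle is the $\overline{S_{d,k+1}}$ case, where one must carefully check the independence of the tangent directions arising from different deformation families---naive Terracini can overcount when several limits of the parameterizing map coincide at $f$. A potentially cleaner alternative is to work directly with Proposition \ref{3}: the singular locus of a dual variety $Y^\vee$ contains hyperplanes tangent to $Y$ at singular points and hyperplanes with higher-order tangency, and applying this dictionary to $Y=\Delta_{3,2^k,1^{d-2k-3}}$ should identify $\overline{S_{d,k+1}}$ and $\overline{S_{d,d-k+1}}$ with the two singular strata directly, via the combinatorics of the multiple root loci.
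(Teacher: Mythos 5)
Your overall strategy---prove singularity at a general point of each irreducible component by showing $\dim T_f\overline{S_{d,d-k}}>2k+2$ and then invoke closedness of the singular locus---is sound in outline, but neither of the two cases is actually carried out: in both places the key dimension count is asserted rather than derived. For $\overline{S_{d,d-k+1}}$ you appeal to ``the same Terracini argument'' behind $\sigma_{r-1}(X)\subseteq\Sing\sigma_r(X)$, but that inclusion is itself a theorem needing proof, and $\overline{S_{d,d-k}}$ is a join with the tangent variety rather than a secant variety, so the transfer is not automatic. The mechanism you describe (adding $\epsilon l^d$ and letting $\epsilon\to 0$) does produce limits of tangent spaces at nearby smooth points, and such limits do lie in $T_f\overline{S_{d,d-k}}$; what is missing is the verification that their union over varying $l$ spans strictly more than $2k+2$ dimensions, and that is exactly the computation that has to be done. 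For $\overline{S_{d,k+1}}$ you concede the difficulty yourself and offer only unexecuted alternatives (``should identify\dots'', ``would exploit\dots''); the proposed perturbation of the apolar generator $q$ is never set up concretely enough to extract a single tangent direction.

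The paper's proof is far more direct and handles both components with one computation. By Theorem \ref{2} every element of $\overline{S_{d,d-k}}$ has the form $f=l_0^{d-1}g+l_1^d+\dots+l_k^d$; differentiating this parameterization in the coefficients of the $l_i$ and of $g$ shows the image of the differential is spanned by the $2k+4$ forms $xl_i^{d-1},yl_i^{d-1}$ ($i=1,\dots,k$) together with $xl_0^{d-2}g$, $yl_0^{d-2}g$, $xl_0^{d-1}$, $yl_0^{d-1}$. Generically the last four span the $3$-dimensional space $l_0^{d-2}\cdot K[x,y]_2$, giving projective dimension $2k+2$ as expected. When $g=l_0$ (a general point of $\overline{S_{d,k+1}}$) those four forms span only a $2$-dimensional space, and when $l_i=l_j$ for some $i\neq j$ (a general point of $\overline{S_{d,d-k+1}}$) two pairs of the first $2k$ generators coincide; in either case the span drops below projective dimension $2k+2$ and the point is singular. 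You should adopt this explicit tangent-space computation: it sidesteps entirely the independence-of-limits issues that you correctly identify as the obstacle in the Terracini approach.
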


\noindent\textbf{Acknowledgements.} We would like to thank Giorgio Ottaviani for proposing this work and for the valuables discussions, suggestions and encouragement. This work has been generously supported by European Union's Horizon 2020 research and innovation programme under the Marie Skłodowska-Curie Actions, grant agreement 813211 (POEMA).

\section{Preliminaries}

\subsection{The Multiple Root Locus of Binary Forms}

We follow basically the notation in \cite{s}. Given an integer $n$, we say that a vector $\lambda=(\lambda_1,\dots,\lambda_d)$ is a partition of $n$ with $d$ parts if $\lambda_1\geq\dots\geq \lambda_d>0$ and $|\lambda|:=\lambda_1+\dots+\lambda_d=n$. Apart from this notation, we may also write a partition as a multiset $\lambda=\{1^{m_1},\dots,p^{m_p}\}$, where $m_i\geq 0$ is an integer for $i=1,\dots, p$, and represents that there are $m_i$ elements in the partition that are equal to $i.$ 

The set of homogeneous binary forms of degree $n$ corresponds to a variety on $\PP^n$ associating the points to the coefficient of each monomial in the polynomial expansion. The multiple root locus $\Delta_\lambda$ associated to a partition  $\lambda=(\lambda_1,\dots,\lambda_d)$ of $n$ is a subvariety of $\PP^n$ associated to the polynomials that have $d$ roots with multiplicity $\lambda_1,\dots,\lambda_d.$ The dimension of this variety is $\dim(\Delta_{\lambda})=d$ and its singular locus is a subset of the union $$\bigcup_{\lambda \mbox{ properly refines } \mu}\Delta_{\mu},$$ as described in \cite[Section 3]{sk}, and in \cite{jvc}.

We are particularly interested in the dual varieties $\Delta_{\lambda}^\vee.$ These are studied in \cite{Oeding_2012} and \cite{s}. In particular, Hilbert found that the degree of $\Delta_{\lambda}$ is $\deg(\Delta_{\lambda})=\frac{d}{m_{1}!\cdots m_{p}!}\lambda_{1}\cdots\lambda_{d}$ and, when the dual $\Delta_{\lambda}^{\vee}$ is a hypersurface (i.e., $m_{1}=0$), \cite[Theorem 5.3]{Oeding_2012} establishes that its degree is $\deg(\Delta_{\lambda}^{\vee})=\frac{(d+1)!}{m_{2}!\cdots m_{p}!}(\lambda_{1}-1)\cdots(\lambda_{d}-1).$

Notice that, given a partition $\lambda$ as above, we have another definition for $\Delta_\lambda$, it also is the image of $$
\xymatrix{ (\PP^1)^d \ar[r]& \PP^n,\ 
(l_1,\dots,l_d) \ar@{|->}[r] &l_1^{\lambda_1}\dots l_d^{\lambda_d}}.
$$ It follows that the dimension of $\Delta_\lambda$ is $d$ and its smooth points are those in which all the linear forms $l_i$ are pairwise different.

The following lemma gives an expression of the tangent space of a multiple root locus at a smooth point.

\begin{lema}\cite[Lemma 2.1, Lemma 2.2]{s} 
Let $f=l_1^{\lambda_1}\dots l_d^{\lambda_d}\in \Delta_\lambda$ be a smooth point and $g\in (\PP^n)^\vee.$ Then the tangent space at a $f$ is given by $$T_f\Delta_\lambda=\{h(x,y)\prod_{i=1}^d l_{i}^{\lambda_i-1}\mid h\in \PP(K[x,y]_d)\}.$$ Furthermore, $g\perp T_f\Delta_\lambda$ if and only if $$\prod_{i=1}^d l_i^{\lambda_i-1}(\frac{\partial}{\partial u},\frac{\partial}{\partial v})$$ annihilates $g(u,v)$.
\end{lema}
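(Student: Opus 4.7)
The plan is to read the tangent space directly off the parameterization
\[
\phi: (\PP^1)^d \to \Delta_\lambda, \qquad (l_1, \dots, l_d) \mapsto l_1^{\lambda_1}\cdots l_d^{\lambda_d},
\]
and then convert the orthogonality condition into a differential-operator statement via apolarity. At a smooth point the linear forms $l_1,\dots,l_d$ are pairwise non-proportional, so $\phi$ is a local submersion onto $\Delta_\lambda$ (modulo the finite symmetry permuting factors with equal $\lambda_i$), and therefore $T_f\Delta_\lambda$ coincides with the image of the differential of $\phi$.

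To compute that image, I would apply the Leibniz rule to $\phi(l_1+\epsilon m_1,\dots,l_d+\epsilon m_d)$; the linear term in $\epsilon$ is
\[
d\phi(m_1,\dots,m_d) \;=\; \bigg(\prod_{k=1}^d l_k^{\lambda_k-1}\bigg)\sum_{i=1}^d \lambda_i\, m_i \prod_{j\neq i} l_j,
\]
so every tangent direction is a multiple of $\prod_k l_k^{\lambda_k-1}$. It thus remains to show that the second factor ranges over the full space $K[x,y]_d$ as $(m_1,\dots,m_d)$ varies over $(K[x,y]_1)^d$.

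This is the main obstacle of the proof. I would handle it by a Lagrange-interpolation argument: since the roots $p_1,\dots,p_d$ of $l_1,\dots,l_d$ are distinct, evaluation of $\sum_i \lambda_i m_i \prod_{j\neq i}l_j$ at $p_k$ collapses to the single term $\lambda_k m_k(p_k)\prod_{j\neq k} l_j(p_k)$; hence by adjusting the values $m_k(p_k)$ one can prescribe the $d$ values of the image at $p_1,\dots,p_d$, and the remaining freedom in each $m_k$ modulo $l_k$ produces an additional common multiple of $\prod_j l_j$, supplying the last dimension needed to cover the $(d+1)$-dimensional space $K[x,y]_d$. Once this surjectivity is established, the description of $T_f\Delta_\lambda$ follows.

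For the second assertion I would invoke the standard adjunction between multiplication and contraction under the apolar pairing on binary forms of degree $n$: for $G\in K[x,y]_{n-d}$ and $h\in K[x,y]_d$,
\[
\langle g,\; h\cdot G\rangle \;=\; \bigl\langle G\bigl(\tfrac{\partial}{\partial u},\tfrac{\partial}{\partial v}\bigr)\,g,\; h\bigr\rangle.
\]
Taking $G=\prod_{i=1}^d l_i^{\lambda_i-1}$, the condition $g\perp T_f\Delta_\lambda$ becomes the vanishing of $\langle G(\partial)\,g,\; h\rangle$ for every $h\in K[x,y]_d$, which by non-degeneracy of the apolar pairing on $K[x,y]_d$ is equivalent to $G(\partial/\partial u,\partial/\partial v)$ annihilating $g$, as claimed.
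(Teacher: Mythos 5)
The paper states this lemma purely as a citation of Lemma 2.1 and Lemma 2.2 of its reference and supplies no proof of its own, so the only comparison available is with the standard argument, which is exactly what you give. Your computation of the differential of the parameterization $(l_1,\dots,l_d)\mapsto l_1^{\lambda_1}\cdots l_d^{\lambda_d}$, the Lagrange-interpolation step showing that $\sum_i \lambda_i m_i \prod_{j\neq i} l_j$ together with the multiple $\prod_j l_j$ sweeps out all of the $(d+1)$-dimensional space $K[x,y]_d$, and the adjunction between multiplication and contraction under the (non-degenerate) apolar pairing are all correct, so the proposal is sound and is essentially the canonical proof of this statement.
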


As we want to obtain a good description of the dual variety, we need to introduce the conormal variety. The conormal variety of $\Delta_\lambda$ is given by the closure of the set $$\{(f,g)\mid f\in\Delta_\lambda \text{ is a smooth point and } g\perp T_f\Delta_\lambda\}.$$ The dual variety $\Delta_\lambda^\vee$ is the image of the projection of the conormal variety onto the second factor. Therefore it is the variety of binary forms that are annihilated by some $f$. This observation leads to a parametrization of the conormal variety: it can be seen as the set of points $(f,g)$ of the form $$
f(x,y)=\prod_{i=1}^d(t_ix-s_iy)^{\lambda_i},\ g(u,v)=\sum_{i=1,\lambda_{i}\neq 1}^d(s_iu+t_iv)^{n-\lambda_i+2}g_i(u,v), 
$$ where $g_i(u,v)$ are binary forms of degree $\lambda_i-2$, and $(s_i,t_i)\in \PP^1$. The dimension of the dual variety to $\Delta_\lambda$ is given, using \cite[Corollary 7.3]{KATZ2003219}, by $$\dim\Delta_\lambda^\vee=n-m_1-1.$$

The inclusions between multiple root loci can be characterized in terms of refinements of the partitions that define them. Hence we have that $\Delta_\lambda\subset\Delta_\mu$ if and only if $\mu$ refines $\lambda$.

In addition, for a partition $\lambda=\{1^{m_1},\dots,p^{m_p}\}$, we denote its derived partition $\lambda':=(1^{m_2},\dots, (p-1)^{m_p})$, and this is a partition of $n-d$, where $d=\sum m_i$ is the number of parts. The next proposition gives a result similar to the one in the previous paragraph for inclusions between dual varieties. These inclusions are also characterized via refinements of partitions although it is not as direct as the previous one: the equivalent condition for the inclusion of duals involves refinements of derived partitions. Expressing this new condition requires thus the related partitions that we have just introduced.

\begin{proposicion}\cite[Proposition 3.4]{s}
Given two partitions $\lambda, \mu$ of $n$, then $\Delta_\lambda^\vee\subset\Delta_\mu^\vee$ holds if and only if $|\lambda'|\leq|\mu'|$ and, by adding to the parts, $\lambda'$ can be transformed into a partition $\tilde{\lambda}$ that is refined by $\mu'$. 
\end{proposicion}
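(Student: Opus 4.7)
The approach combines the stratification from Theorem \ref{2} with the join description $\overline{S_{d,d-k}}=J(\tau(X),X^k)$ coming from Proposition \ref{3} and \cite{jbkhmmzt}, and produces enough tangent directions at each boundary-stratum point to force singularity. By Theorem \ref{2} we have $\overline{S_{d,d-k}}\smallsetminus\overline{S_{d,d-k+1}}=S_{d,k+1}\sqcup S_{d,d-k}$, and Comas-Seiguer gives $\overline{S_{d,k+1}}=S_{d,k+1}\cup\overline{S_{d,d-k+1}}$. Since the singular locus is closed, it suffices to prove (A) $\overline{S_{d,d-k+1}}\subset\Sing(\overline{S_{d,d-k}})$ and (B) $S_{d,k+1}\subset\Sing(\overline{S_{d,d-k}})$, and then take Zariski closures.

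For (A), applying Proposition \ref{3} with $k-1$ in place of $k$ identifies $\overline{S_{d,d-k+1}}$ with $J(\tau(X),X^{k-1})$, so any $g\in\overline{S_{d,d-k+1}}$ has the form $g=\tau_0+c_1v_1+\cdots+c_{k-1}v_{k-1}$ with $\tau_0\in\tau(X)$ and $v_i\in X$. For every $v$ in the affine cone of $X$, the family $g_t=g+tv$ stays in $J(\tau(X),X^k)=\overline{S_{d,d-k}}$, since the summand $tv$ fills the extra Veronese slot. Hence $v\in T_g^{\mathrm{aff}}\overline{S_{d,d-k}}$. Because the Zariski tangent space is a linear subspace and the affine cone of the rational normal curve $X$ spans $K^{d+1}$, we conclude $T_g^{\mathrm{aff}}\overline{S_{d,d-k}}=K^{d+1}$; equivalently, the projective tangent space is $\PP^d$. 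The hypothesis $d-k>\ceil{(d+1)/2}$ yields $d\geq 2k+3$, so $\dim\overline{S_{d,d-k}}=2k+2<d$ and $g$ is singular.

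For (B), the case $k=0$ is the classical fact that the rational normal curve is the singular locus of its tangent developable $\overline{S_{d,d}}=\tau(X)$. Assume $k\geq 1$ and let $g=l_0^d+\cdots+l_k^d$ be a generic point of $S_{d,k+1}$. For each index $i$ and each linear form $a$, the one-parameter family
\[
g_t^{(i)}=\sum_{j\neq i}l_j^d+(l_i+ta)^{d-1}\bigl(l_i-t(d-1)a\bigr)
\]
lies in $J(\tau(X),X^k)=\overline{S_{d,d-k}}$ for all $t$, and a direct expansion gives $g_t^{(i)}-g=-\tfrac{d(d-1)}{2}t^2\,l_i^{d-2}a^2+O(t^3)$. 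Hence the tangent cone (and therefore the Zariski tangent space) of $\overline{S_{d,d-k}}$ at $g$ contains $l_i^{d-2}a^2$ for every $a\in K[x,y]_1$, and therefore the entire $3$-dimensional subspace $l_i^{d-2}K[x,y]_2$. Summing over $i$, the Zariski tangent at $g$ contains $\sum_{i=0}^{k}l_i^{d-2}K[x,y]_2$. For generic $l_i$ and $d\geq 2k+3\geq 5$, pairwise coprimality of $l_i^{d-2}$ with $l_j^{d-2}$ together with the degree count $\deg a^2=2<d-2$ forces the subspaces $l_i^{d-2}K[x,y]_2$ to be in general position, so the sum has affine dimension $\min(3(k+1),d+1)\geq 2k+4$, strictly larger than $\dim\overline{S_{d,d-k}}+1=2k+3$. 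Hence $g$ is singular, and by closedness of the singular locus $S_{d,k+1}\subset\Sing(\overline{S_{d,d-k}})$.

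The main obstacle is the technical verification in (B): one must confirm that the second-order deformation indeed injects the tangent-cone directions $l_i^{d-2}a^2$ into the Zariski tangent space (which follows from the standard inclusion of the tangent cone in the Zariski tangent), and that the subspaces $l_i^{d-2}K[x,y]_2$ are in general position for generic $l_i$ under the hypothesis $d\geq 2k+3$. Granting these, combining (A) and (B) and taking closures via $\overline{S_{d,k+1}}=S_{d,k+1}\cup\overline{S_{d,d-k+1}}$ yields $\overline{S_{d,k+1}}\cup\overline{S_{d,d-k+1}}\subset\Sing(\overline{S_{d,d-k}})$ as claimed.
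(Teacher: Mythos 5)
Your proposal does not address the statement in question. The proposition to be established is a purely combinatorial criterion for containment of dual multiple root loci: given two partitions $\lambda,\mu$ of $n$, the inclusion $\Delta_\lambda^\vee\subset\Delta_\mu^\vee$ holds if and only if $|\lambda'|\leq|\mu'|$ and $\lambda'$ can be grown (by adding to its parts) into a partition $\tilde{\lambda}$ refined by $\mu'$. A proof would have to work with the parametrization of the conormal variety of $\Delta_\lambda$, the derived partition $\lambda'$, and the way refinement of partitions governs degeneration of root configurations. Your argument never mentions partitions, refinements, derived partitions, or the varieties $\Delta_\lambda^\vee$ at all.

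What you have written is instead a proof attempt for Theorem \ref{4} of the paper (that $\overline{S_{d,k+1}}\cup\overline{S_{d,d-k+1}}\subset\Sing(\overline{S_{d,d-k}})$): the decomposition into cases (A) and (B), the join description $J(\tau(X),X^k)$, the second-order deformations producing tangent directions $l_i^{d-2}a^2$, and the dimension count against $2k+2$ are all aimed at that singular-locus statement, not at the inclusion criterion for $\Delta_\lambda^\vee\subset\Delta_\mu^\vee$. (For the record, the cited proposition is quoted in the paper from an external reference and is not reproved there; but in any case, as a proof of the stated proposition your text is not merely incomplete, it is a proof of a different theorem.) If you intended to prove Theorem \ref{4}, resubmit it under that statement; as it stands, there is no argument here for the claim actually posed.
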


\subsection{The Waring and the Forbidden Loci}

In this subsection we follow \cite{Carlini_2017}, where the notion of Waring locus and its counterpart, the forbidden locus, were introduced and we set $V$ here as a finite dimensional $\mathbb{K}$-vector space (not only of degree $2$). As before, let $SV$ be the vector space of forms over $V$ and $S_{d}V\subseteq SV$ the vector space of forms of degree $d,$ i.e., degree $d$ homogeneous polynomials over the field $\mathbb{K}$ in $\dim(V)$ variables. When there is no possibility of confusion we just write $S_{d}$ for $S_{d}V.$

\begin{definicion}
Given a form $f$ of degree $d$ and rank $r,$ the Waring locus $\mathcal{W}_f$ of $f$ is the set of linear forms that appear, up to scalar multiplication, in a minimal Waring decomposition of $f$, i.e., $$\mathcal{W}_f:=\{ [l]\in \mathbb{P}(S_1)\ \mid\ \exists l_2,\dots,l_r\in S_1,\  f\in\linspan(\{l^d,l_2^d,\dots,l_r^d\}) \}.
$$ The forbidden locus $\mathcal F_f$ is the complement, $\mathcal F_f:=\mathbb{P}(S_1)\smallsetminus\mathcal W_f$.
\end{definicion}

The Waring and the forbidden loci of binary forms are described in \cite[Theorem 3.5]{Carlini_2017}. Moreover, linear forms in these loci are important because it is easy to describe changes in the rank of forms $f$ when they are added to $\deg(f)$-th powers of linear forms lying on these loci. In particular, adding $\deg(f)$-th powers of linear forms in the forbidden locus never decreases the rank.

\begin{remark}
Notice that if a linear form $l\in S_{1}$ is an element of the forbidden locus $\mathcal F_f$ of the degree $d$ form $f$, then we have that $\rank(f+l^d)\geq \rank(f).$
\end{remark}

Now we introduce the apolar ideal, which will be fundamental together with the theorem associated to it to bridge multiple root loci and varieties generated by forms of certain fixed rank. The apolar ideal can be seen as the ideal formed by all the forms perpendicular to $f$ with respect to the scalar product by differentiation through dual variables.

\begin{definicion}
Let $f$ be a form of degree $d$, the apolar ideal of $f$, denoted $(f)^\perp$, is the ideal of elements $g\in SV^\vee$ such that $g\cdot f=0$, where $\cdot$ represents the contraction (by differentiation) of $f$ by $g$.
\end{definicion}

After this definition we remember the next well-known result which will be fundamental to bridge some multiple root loci with varieties generated by forms of fixed rank.

\begin{lema}[Apolarity Lemma]\label{al}
Let $f\in S_{d}.$ Then $f=l_{1}^{d}+\cdots+l_{s}^{d},$ where the summands $l_{i}\in S_{1}$ are pairwise non-proportional linear forms, if and only if $(f)^{\perp}\supseteq I,$ where $I$ is the ideal of the set $X=\{l_{1},\dots,l_{s}\}\subseteq S_{1}V^\vee$ of $s$ different points formed by all the $s$ pairwise non-proportional linear forms in the previous expression of $f$ as a sum of $d$-th powers of linear forms.
\end{lema}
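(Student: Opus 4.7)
The plan is to prove the two directions separately using two ingredients. The first is that the apolar pairing in degree $d$ is a perfect pairing between $S_{d}$ and $(SV^{\vee})_{d}$, which follows from the fact that contraction of the monomial $y_{0}^{b_{0}}\cdots y_{n}^{b_{n}}$ against $x_{0}^{a_{0}}\cdots x_{n}^{a_{n}}$ is a nonzero scalar exactly when the exponents match. The second is the identity
$$
g\cdot l^{d} \;=\; \frac{d!}{(d-e)!}\, g(l)\, l^{d-e}
$$
valid for $g\in (SV^{\vee})_{e}$ with $0\le e\le d$, where $g(l)$ denotes the evaluation of the polynomial $g$ at the point $[l]$. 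This identity reduces to the chain rule at $e=1$ and extends to arbitrary $e$ by linearity and induction.

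For the forward implication, assume $f=l_{1}^{d}+\cdots+l_{s}^{d}$ with the $l_{i}$ pairwise non-proportional, and let $g\in I$ be homogeneous of degree $e$. If $e>d$ then $g\cdot f=0$ automatically, because a differential operator of order exceeding $d$ kills any form of degree $d$. If $e\le d$, expanding and invoking the identity gives
$$
g\cdot f \;=\; \sum_{i=1}^{s} g\cdot l_{i}^{d} \;=\; \frac{d!}{(d-e)!}\sum_{i=1}^{s} g(l_{i})\, l_{i}^{d-e},
$$
and this vanishes because $g$ lies in the vanishing ideal of the points $[l_{1}],\dots,[l_{s}]$. Hence $I\subseteq (f)^{\perp}$.

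For the converse, assume $I\subseteq (f)^{\perp}$, so in particular $g\cdot f=0$ for every $g\in I_{d}$. Specializing the identity to $e=d$ yields $g\cdot l_{i}^{d}=d!\,g(l_{i})$, so the elements of $(SV^{\vee})_{d}$ that annihilate every $l_{i}^{d}$ are exactly those that vanish on all of the points $[l_{i}]$, namely $I_{d}$. By non-degeneracy of the apolar pairing of degree $d$, double orthogonality gives $\linspan\{l_{1}^{d},\dots,l_{s}^{d}\}=(I_{d})^{\perp}$, and the hypothesis places $f$ in this span. Therefore $f=\sum_{i}c_{i}l_{i}^{d}$ for some scalars $c_{i}\in K$; since $K$ is algebraically closed of characteristic zero, $d$-th roots of each nonzero $c_{i}$ exist and can be absorbed into the corresponding $l_{i}$, preserving pairwise non-proportionality. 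Discarding vanishing summands then produces an expression of the form claimed.

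The delicate point is the converse direction, and within it the identification of the orthogonal complement of $\linspan\{l_{i}^{d}\}$ under the apolar pairing with the degree-$d$ piece $I_{d}$ of the ideal of the points $[l_{i}]$. Once this identification is established, everything else is formal: non-degeneracy of the apolar pairing yields the reverse orthogonal, and the rescaling argument removes the scalar coefficients. The forward direction, by contrast, is a direct computation using only the key identity and the defining property of $I$.
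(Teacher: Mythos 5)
The paper does not prove this lemma: it is quoted as a well-known result (the Apolarity Lemma of Macaulay/Iarrobino--Kanev) and used as a black box, so there is no in-paper argument to compare against. Your proof is the standard one and is correct: the key identity $g\cdot l^{d}=\frac{d!}{(d-e)!}\,g(l)\,l^{d-e}$ gives the forward direction immediately, and the converse follows from the degree-$d$ specialization $g\cdot l_i^{d}=d!\,g(l_i)$, which identifies $I_{d}$ with $\bigl(\linspan\{l_{1}^{d},\dots,l_{s}^{d}\}\bigr)^{\perp}$, plus perfectness of the apolar pairing and double orthogonality. One small point worth flagging: after you obtain $f=\sum_i c_i l_i^{d}$ and discard the summands with $c_i=0$, the resulting decomposition may involve a proper subset of $X$, so the literal equality $f=l_1^{d}+\cdots+l_s^{d}$ with exactly those $s$ forms need not hold; this is the usual reason the lemma is more carefully stated as $f\in\linspan\{l_1^{d},\dots,l_s^{d}\}\iff I_X\subseteq(f)^{\perp}$. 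Your argument proves that cleaner statement, which is what the paper actually uses, so the discrepancy is with the paper's phrasing rather than with your proof.
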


A final important remark concerns the good description of the apolar ideal that we have in the case of a binary form. This description is fundamental in the proof of the result bridging multiple root loci and varieties generated by forms of fixed rank, which is itself an intermediate result towards our main (and final) theorem.

\begin{remark}
If $f$ is a binary form of degree $d,$ then $(f)^\perp=(g_1,g_2)$ with $\deg(g_1)+\deg(g_2)=n+2.$ In addition, if $\deg(g_1)\leq \deg(g_2),$ then $\rank(f)=\deg(g_1)$ if $g_1$ is squarefree and $\rank(f)=\deg(g_2)$ otherwise.
\end{remark}

\section{Binary forms of suprageneric rank}

\subsection{The variety of rank $k$ forms and the multiple root loci}

The relation between the variety $\overline{S_{d,k}}$ was well know for degrees smaller than $6$. So the first interesting example is the case where the degree is $d=6.$ We explore this case for ranks bigger than the generic rank $r=4.$ In the particular case of $f\in S_{6,6},$ we have that the ideal $(f)^\perp=(g_1,g_2)$ with $d_1+d_2=8,$ where $d_1$ and $d_2$ are the respective degrees. Since the rank of $f$ is $6$, we must have $d_1=2, d_2=6,$ and $g_1$ has a double root. Therefore the only possibility is that $g_1=l^2$, where $l$ is a linear form. In such case, by an immediate application of \cite[Lemma 2.2]{s}, we know that $f\in\Delta_{3,1^{3}}^\vee.$ The other inclusion follows from dimensional count. We can use such idea to compute any $S_{d,r}.$ For example, proceeding similarly for the rank $5$ we have that $d_1=3, d_2=5$ and therefore we have that $g_1$ has two possible cases: either $l_1^3$ or $l_1^2l_2$. In such case, $f\in\Delta_{4,1^{2}}^\vee$ or $f\in\Delta_{3,2,1}^\vee,$ respectively. We can see that the first is contained in the second, and therefore $f\in\Delta_{3,2,1}^\vee$. The other side follows again by dimensional count.

In \cite[Proposition 19]{jbkhmmzt} it was obtained that the dimension of $\overline{S_{d,r}}$ for $r$ bigger than the generic rank is given by $$\dim \overline{S_{d,r}}=\dim \sigma_{d-k+2}-1=2(d-r+1).$$ Using this fact together with the preceding idea developed in the example, we can obtain the following argument.

\begin{proof}[Proof of Proposition \ref{3}]
Let $f\in S_{d,d-k}$ be a homogeneous polynomial of degree $d$ and rank $d-k.$ We know that the apolar ideal $(f)^\perp$ is generated by $(g_1,g_2),$ such that $d_1+d_2=d+2$, with $d_1\leq d_2$ the respective degrees, and $\rank(f)=d_2,$ if $g_1$ is not squarefree, or $\rank(f)=d_1$ otherwise. So we may assume that $d_2=d-k, d_1=k+2$ and $g_1$ has a double root. Thence $g_1$ has the following form $l_0^2l_1\dots l_k$ and $f\in\Delta_{3,2^k,1^{d-2k-3}}^\vee.$ (Notice that all other possibilities for $g_1$, that is, with more than a single double root, lead to a different partition $\lambda$ but all of those are such that $\lambda'$ is refined by $(2,1^k)$ and therefore we have $\Delta_\lambda^\vee\subset \Delta_{3,2^k,1^{d-2k-3}}^\vee$ in such case.) It follows that $S_{d,d-k}\subseteq\Delta_{3,2^k,1^{d-2k-3}}^{\vee}$. On the other hand, by the proof of \cite[Proposition 19]{jbkhmmzt}, we have that $\dim \overline{S_{d,d-k}}=\dim \sigma_{k+2}-1=(2k+3)-1=2k+2,$ and $\dim\Delta_{3,2^k,1^{d-2k-3}}^\vee=d-m_1-1=2k+2$, so equality holds.
\end{proof}

Following \cite[Theorem 2]{gcms}, we obtain a similar result for the varieties of rank $r$ bigger than the generic rank. Furthermore, we also give another description for $\overline{S_{d,d-k}}$.

\begin{proof}[Proof of Theorem \ref{2}]
First we notice that, since $\overline{S_{d,d-k}}=\Delta_{3,2^k,1^{d-2k-3}}^\vee,$ then it can be described as the set of sums of powers of linear forms $$\{l_0^{d-1}g+l_1^d+\dots+l_k^d|l_i, g \text{ are linear forms, for }i=0,\dots k  \}.$$ 

It is trivial that $\cup_{i=0}^{k+1}S_{d,i}$ is contained on $\overline{S_{d,d-k}}$ by just considering $g=l_0$. If $f$ has rank $d-i$, with $i\leq k$, then $f=\sum^{d-i}_{j=1} l_j^d.$ It is possible to choose $l_{d-i+1},\dots, l_d$ linear forms such that $$f+l_{d-i+1}^{d}+\dots +l_d^d=\sum^{d}_{j=1} l_j^d$$ has rank $d$. This can be done by choosing the first linear form $l_{d-i+1}$ in the forbidden locus $\mathcal F_f$ of $f$, and repeating this procedure inductively to the new polynomial obtained. Since the obtained polynomial has rank $d$, we have that it can be written as $l^{d-1}g$, for some $l,g$ linear forms. Substituting it on the right side of the equation, we have that $$f=l^{d-1}g-\sum^{i}_{j=1} l_{d-i+j}^d,$$ which is an element of $\Delta_{3,2^k,1^{d-2k-3}}^\vee.$ So we obtained that $$\overline{S_{d,d-k}}\supset (\cup_{i=0}^{k+1}S_{d,i})\bigcup (\cup_{i=0}^{k} S_{d,d-i}).$$

For the other inclusion, suppose that $f\in \Delta_{3,2^k,1^{d-2k-3}}^\vee$, then $f=l_0^{d-1}g+l_1^d+\dots+l_k^d$ for some linear forms $l_0,\dots, l_{k},g$. We analyse two cases. If $g=l_0$, it is clear that $\rank(f)\leq k+1$. Otherwise, suppose that $g\neq l_0$, then $l_0^{d-1}g$ has rank $d$. Since all the other summands are power of linear forms, each of them can either decrease the rank by $1$, if they are on the Waring locus of $l_0^{d-1}g,$ or it does not change the rank, and it remains equal to $d$, if they are in the forbidden locus; in both cases, we have that $\rank(f)\geq d-k,$ hence we have the equality.
\end{proof}

\begin{proof}[Proof of Theorem \ref{4}]
Let $f=l_0^{d-1}g+l_1^d+\dots+l_k^d$ be a point of $\overline{S_{d,d-k}}$. We compute the tangent space at $f$, by considering $l_i=a_ix+b_iy$ and $g=\alpha x+\beta y$. We can consider a curve $$f(t)=\sum_{i=1}^k (a_i(t)x+b_i(t)y)^d+(a_0(t)x+b_0(t)y)^{d-1}(\alpha(t)x+\beta(t)y),$$ with $f(0)=f$, then taking the derivatives on the $a_i,b_i,\alpha,\beta$ we have that the tangent space is generated by $$
T_f\overline{S_{d,d-k}}=\langle yl_i^{d-1},xl_i^{d-1},xl_0^{d-2}g,yl_0^{d-2}g,xl_0^{d-1},yl_0^{d-1}\rangle, \ i=1,\dots,k.
$$

This space has $2k+4$ generators, but we notice that the last four of them span a $3$-dimensional space, so it has projective dimension $2k+2$ in a general point, as expected. We consider two cases now, first if $g$ is equal to $l_0$, in other words, the case that $f$ is a general element of $\overline{S_{d,k+1}}$. We notice that instead of a $3$-dimensional space, the last four elements on the span generates a $2$-dimenensional space, this means that the projective dimension of $T_f\overline{S_{d,d-k}}$ is at most $2k+1$, therefore $f$ is a singular point. Now instead,  assume that $l_i=l_j$ for some $i,j\neq 0$ and $i\neq j$, then $f$ is a general element of $\overline{S_{d,d-k+1}}$ and the dimension of $T_f\overline{S_{d,d-k}}$ is less than $2k+2$, again this gives that $f$ is a singular element of $\overline{S_{d,d-k}}$. 
\end{proof}
\subsection{The hypersurface $S_{2k+1,k+2}$}
Let $f\in \overline{S_{2k+1,k+2}}$, the maximal catalecticant matrix $C_f$ associated to $f$ has size $(k+1)\times (k+2)$.
In \cite[Theorem 4.1]{s} it is proven that this hypersurface has degree $2k(k+1)$ and its equation is computed, namely, the defining polynomial is the discriminant of
$$
q(u,v)=\det\begin{bmatrix}
u^{k+1}&u^{k}v&\dots& uv^{k}&v^{k+1}\\
a_0&a_1&\dots&a_{k}&a_{k+1}\\
a_1&a_2&\dots&a_{k+1}&a_{k+2}\\
\vdots&\vdots&\ddots&\vdots&\vdots\\
a_{k}&a_{k+1}&\dots&a_{2k}&a_{2k+1}
\end{bmatrix}.
$$
With this description we can obtain the following result.

\begin{teorema}
$S_{2k+1,k}$ is an irreducible component of $\Sing(S_{2k+1,k+2})$.
\end{teorema}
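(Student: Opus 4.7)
The plan is to combine the containment already supplied by Theorem~\ref{4} with a short dimension count showing that $\overline{S_{2k+1,k}}$ cannot be enlarged inside $\Sing(\overline{S_{2k+1,k+2}})$.

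First I would pin down the relevant dimensions and irreducibility statements. The ambient hypersurface is the instance of Proposition~\ref{3} obtained by setting $d=2k+1$ and the internal parameter so that $d-k'=k+2$, i.e.\ $k'=k-1$; the formula there gives $\dim \overline{S_{2k+1,k+2}}=2k'+2=2k$, consistent with its being a hypersurface in $\PP^{2k+1}$. Moreover, $\overline{S_{2k+1,k+2}}$ is irreducible, either as the dual of the irreducible multiple root locus $\Delta_{3,2^{k-1}}$ or as the join of irreducible varieties from \cite{jbkhmmzt}. On the other hand, $\overline{S_{2k+1,k}}=\sigma_{k}(X)$ is the $k$-th secant variety of the rational normal curve $X\subseteq \PP^{2k+1}$, which is irreducible and, by the classical non-defectivity of the rational normal curve, has dimension $2k-1$.

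Next, I would apply Theorem~\ref{4} with the same parameters $d=2k+1$, $k'=k-1$: the hypothesis $d-k'=k+2>k+1=\lceil(d+1)/2\rceil$ is satisfied, and the conclusion already includes $\overline{S_{2k+1,k}}$ (as well as $\overline{S_{2k+1,k+3}}$) inside $\Sing(\overline{S_{2k+1,k+2}})$, so the containment we need is in place.

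Finally, I would establish the maximality by a dimension argument. Suppose for contradiction that $\overline{S_{2k+1,k}}\subsetneq W$ for some irreducible closed subvariety $W\subseteq \Sing(\overline{S_{2k+1,k+2}})$. Strict containment of irreducible closed subvarieties forces $\dim W\geq \dim \overline{S_{2k+1,k}}+1=2k$. But the singular locus of an irreducible variety over an algebraically closed field of characteristic zero is a proper closed subvariety, so $\dim \Sing(\overline{S_{2k+1,k+2}})\leq 2k-1$, contradicting $\dim W\geq 2k$. Hence no such $W$ exists and $\overline{S_{2k+1,k}}$ is an irreducible component of $\Sing(\overline{S_{2k+1,k+2}})$. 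The only delicate point in this plan is confirming the irreducibility of $\overline{S_{2k+1,k+2}}$, which can be read off either from the multiple root locus description in Proposition~\ref{3} or from the parametrization given in the proof of Theorem~\ref{2}.
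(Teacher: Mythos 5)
Your argument is correct, but it reaches the key containment $\overline{S_{2k+1,k}}\subseteq \Sing(\overline{S_{2k+1,k+2}})$ by a different mechanism than the paper. The paper works directly with the defining equation from \cite[Theorem 4.1]{s}: it is a polynomial of degree $2k$ in the maximal minors $b_0,\dots,b_k$ of the $(k+1)\times(k+2)$ catalecticant, so every partial derivative with respect to a coefficient $a_i$ is a sum of terms each still divisible by some $b_j$ (because $2k\geq 2$), and hence the whole gradient vanishes on the determinantal locus $V(b_0,\dots,b_k)=\overline{S_{2k+1,k}}$. You instead obtain the containment by specializing Theorem~\ref{4} (with $d=2k+1$ and internal parameter $k-1$, whose hypothesis $k+2>k+1$ you correctly verify), i.e.\ by the tangent-space computation along the parametrization $l_0^{d-1}g+l_1^d+\dots+l_{k-1}^d$. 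Both are valid; the paper's route has the virtue of being self-contained at the level of the explicit equation and of exhibiting the catalecticant structure responsible for the singularity (which is what lets the authors later distinguish this component from the two components pulled back from the discriminant of $q$), while your route is shorter and reuses machinery already established. Your explicit maximality step --- $\dim\overline{S_{2k+1,k}}=\dim\sigma_k(X)=2k-1$ equals the maximal possible dimension of the singular locus of the irreducible $2k$-dimensional hypersurface, so an irreducible closed set strictly between them cannot exist --- is only implicit in the paper's proof, so spelling it out is a genuine improvement in completeness; the irreducibility inputs you rely on (of $\sigma_k(X)$ and of $\overline{S_{2k+1,k+2}}=\Delta_{3,2^{k-1}}^\vee$) are both available from the paper's setup.
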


\begin{proof}
Let $(a_0,\ldots, a_d)$ be the coefficients of polynomials in $S_d$. The equation of $S_{2k+1,k+2}$ has degree $2k$ in the $(k+1)$-minors $b_j$ (for $j=0,\ldots, k$) of the maximal catalecticant matrix of size $(k+1)\times (k+2)$. Each $b_j$ is a homogeneous polynomial of degree $(k+1)$ in the $a_i$. Let $b_0^{\alpha_0}\ldots b_k^{\alpha_k}$ be a monomial with $|\alpha|=2k$. The derivative with respect to $a_i$ of such monomial 
is $\sum_j \alpha_j b_0^{\alpha_0}\ldots b_j^{\alpha_j-1}\ldots b_k^{\alpha_k}\frac{\partial b_j}{\partial a_i}$. Evaluated at a point $(a_0,\ldots, a_d)$ where all $b_j$ vanishes (this is a point in $S_{2k+1,k}$) this monomial vanishes. This concludes the proof.
\end{proof}

The case $k=2$ was studied before in \cite{pcgo} by Comon and Ottaviani, it is known as the \emph{apple invariant}, in such case the singular locus has two irreducible components, one is $S_{5,2}$, that comes from the minors of the catalecticant, and the other comes from the pullback from the locus of cubics with a triple root $\Delta_{3,1,1}$, that is the dual of the tangent { variety $\tau(S_{5,5})=S_{5,4}$}. For $k\geq 3$, $\Sing(S_{2k+1,k+2})$ has at least three irreducible components, one is $S_{2k+1,k}$, that is obtained from the minors of the catalecticant, the other two components arrives from the two irreducible components of the singular locus of the discriminant of $\sum_{i=0}^{k+1}a_it^i$, it comes as the pullback from the locus of degree $k+1$ polynomials with two double roots and with a triple root. For $k=3$ the components can be computed in Macaulay2, one is $S_{7,3}$, that has codimension $2$ and degree $10$. The other two components
have codimension $2$ and degree respectively $24$ (8 generators of degree 7, it comes as pullback from locus of quartics with two double roots)
and $36$ (55 generators of degree among 8 and 12, it comes as pullback from locus of quartics with a triple root), this case was named as the \emph{big apple invariant} in \cite{s}.
\printbibliography

\end{document}